\newcommand\eps{\varepsilon}
\newcommand\R{{\mathbb R}}
\newcommand\T{{\mathbb T}}
\newcommand\Z{{\mathbb Z}}
\newcommand\Q{{\mathbb Q}}
\newcommand{\Sp}{\operatorname {Sp}}
\newcommand{\GL}{\operatorname {GL}}
\newcommand{\CC}{\mathbb C}
\newcommand{\M}{\operatorname {M}}
\theoremstyle{plain}
\theoremstyle{definition}
\theoremstyle{plain}
\newtheorem{theorem}{Theorem}[section]
\newtheorem{proposition}[theorem]{Proposition}
\newtheorem{corollary}[theorem]{Corollary}
\theoremstyle{definition}
\newtheorem{remark}[theorem]{Remark}
\newtheorem{definition}[theorem]{Definition}
\numberwithin{equation}{section}
\title{Symplectic Partially Hyperbolic Automorphisms\\
	of 6-Torus}
\author{ L.M. Lerman$^1$, K.N. Trifonov$^{1,2}$\\
\normalsize
$^1$HSE University, Russia\\
\normalsize
$^2$Lobachevsky Research State University of Nizhny Novgorod, Russia
}
\date{}
\begin{document}
\maketitle

\begin{abstract}
We study topological properties of automorphisms of a 6-dimensional torus $\T^6$ generated
by integer matrices symplectic with respect to either the standard symplectic structure
in $\R^6$ or a nonstandard symplectic structure given by an integer skew-symmetric
non-degenerate matrix. Such a symplectic matrix generates a partially hyperbolic automorphism
of the torus, if it has eigenvalues both outside and on the unit circle.
We study transitive and decomposable cases possible here and present a classification in
both cases.
\end{abstract}

\section{Introduction}

We focus in this paper on topological dynamics of automorphisms of a 6-dimensional torus
generated by an integer symplectic matrix for the case of partial hyperbolicity.
The hyperbolic case is rather well understood already \cite{Anosov,Franks,KH,Manning}.

Starting from the foundational paper \cite{BP}, many results about partially hyperbolic
diffeomorphisms can be found in \cite{BW,Hammerlindl,HP,Hertz}. Concerning partially hyperbolic
automorphisms on a torus $\T^n$, the most detailed study was done in \cite{Hertz}, which solved
the question of their stable ergodicity posed in \cite{HPS}. Our goal here is to understand
with respect to the topological conjugacy, possible types of orbit behavior of symplectic
automorphisms of $\T^6$. We trust this will be useful as a source of interesting examples.

We consider the standard torus $\T^n = \R^n/\Z^n$ as the factor group of the abelian group
$\R^n$ with respect to its discrete subgroup $\Z^n$ of integer vectors. Denote by
$p:\R^n\to \T^n$ the related factor map, which also is a group homomorphism. The standard
coordinates in the space $\R^n$ will be denoted by $x = (x_1, \dots , x_n)$. Let $A$ be
an unimodular matrix with integer entries. Since the linear mapping $L_A : x\to
Ax$ transforms the subgroup $\Z^n$ onto itself, such a matrix generates a
diffeomorphism $f_A$ of the torus $\T^n$ called an automorphism of the torus
\cite{Anosov,Anosov1,Franks}. Topological properties of such maps are a classical object
of research (see, for example, \cite{Anosov,Franks,KH,Manning}). Because this toral
automorphism also preserves the standard volume element $dx_1\wedge\dots\wedge dx_n$ on the
torus carried over from $\R^n$, its ergodic properties have also been the subject of research
\cite{Bowen,Halmos,Sinai}. The following classical theorem of Halmos holds for automorphisms
of a torus \cite{Halmos}.
\begin{theorem}[Halmos]
A continuous automorphism $f$ of a compact abelian group $G$ is ergodic (and mixing) if and
only if the induced automorphism on the character group $G^*$ has no finite orbits other
than the trivial zero orbit.
\end{theorem}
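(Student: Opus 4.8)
The plan is to translate both properties into Fourier analysis on the Hilbert space $L^2(G,\mu)$, where $\mu$ is the normalized Haar measure of $G$, and to reduce them to a single combinatorial condition on the induced automorphism $\hat f$ of $G^*$, defined by $\hat f(\chi)=\chi\circ f$. Since $G$ is compact, $G^*$ is discrete and, by the Peter--Weyl theorem, the characters $\chi\in G^*$ form an orthonormal basis of $L^2(G,\mu)$. The Koopman operator $U_f\colon L^2(G,\mu)\to L^2(G,\mu)$, $U_f g=g\circ f$, is unitary and merely permutes this basis: $U_f\chi=\chi\circ f=\hat f(\chi)$. Hence, writing $g=\sum_\chi a_\chi\,\chi$, one gets $U_f^{\,n}g=\sum_\chi a_\chi\,\hat f^{\,n}(\chi)$, so the Fourier coefficients of $U_f^{\,n}g$ are obtained from those of $g$ by the permutation $\hat f^{\,n}$ of $G^*$; note that $\hat f$ always fixes the trivial character $\mathbf 1$ (the ``zero'' of $G^*$).

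First I would establish the equivalence with ergodicity, using that $f$ is ergodic iff the only $U_f$-invariant elements of $L^2(G,\mu)$ are the constants. If $\hat f$ has a finite orbit $\{\chi_1,\dots,\chi_k\}$ of nontrivial characters, then $g=\chi_1+\dots+\chi_k$ is a non-constant element of $L^2(G,\mu)$ with $U_f g=g$, so $f$ is not ergodic. Conversely, if every orbit of $\hat f$ other than $\{\mathbf 1\}$ is infinite and $U_f g=g$, then $\chi\mapsto a_\chi$ is constant along $\hat f$-orbits; since $\sum_\chi|a_\chi|^2<\infty$ it must vanish on every infinite orbit, so $a_\chi=0$ for $\chi\neq\mathbf 1$ and $g$ is constant. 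Thus $f$ is ergodic exactly when $\hat f$ has no finite orbit besides $\{\mathbf 1\}$.

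Next I would show the same condition characterizes strong mixing; as mixing implies ergodicity, this simultaneously yields the asserted equivalence of the two properties. Mixing amounts to $\langle U_f^{\,n}g,h\rangle\to\langle g,\mathbf 1\rangle\overline{\langle h,\mathbf 1\rangle}$ for all $g,h\in L^2(G,\mu)$, and by bilinearity, uniform boundedness of the $U_f^{\,n}$, and density of finite linear combinations of characters, it suffices to verify this for $g=\chi$, $h=\psi$ characters. If $\chi=\mathbf 1$ or $\psi=\mathbf 1$ it is immediate; if $\chi,\psi\neq\mathbf 1$ then $\langle U_f^{\,n}\chi,\psi\rangle=\langle\hat f^{\,n}(\chi),\psi\rangle$ equals $1$ when $\hat f^{\,n}(\chi)=\psi$ and $0$ otherwise, and since the orbit of $\chi$ is infinite this happens for at most one $n$, so the inner product is $0$ for all large $n$, matching the right-hand side. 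The converse is the observation from the previous paragraph: a nontrivial finite orbit of $\hat f$ produces a non-constant invariant function, which destroys ergodicity and hence mixing.

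The only ingredient beyond elementary Hilbert-space bookkeeping is Pontryagin duality (Peter--Weyl), which guarantees that the characters form a complete orthonormal system in $L^2(G,\mu)$ and that $G^*$ is discrete; this is where compactness and commutativity of $G$ enter, and it is the conceptual heart of the argument rather than a technical obstacle. One minor point to watch is that $G^*$ need not be countable, but each element of $L^2(G,\mu)$ still has only countably many nonzero Fourier coefficients, so the square-summability step is unaffected.
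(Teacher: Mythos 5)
The paper does not prove this statement; it is quoted as a classical background result with a citation to Halmos's 1943 paper, so there is no internal proof to compare against. Your argument is correct and is essentially the standard (indeed Halmos's own) proof: identify the Koopman operator on $L^2(G,\mu)$ as a permutation of the character basis via the dual automorphism $\hat f(\chi)=\chi\circ f$, observe that a nontrivial finite orbit yields a non-constant invariant function (killing ergodicity, hence mixing), and that absence of such orbits forces invariant functions to be constant and forces $\langle U_f^{\,n}\chi,\psi\rangle$ to vanish for all large $n$, giving mixing after the usual density and uniform-boundedness reduction. The handling of possibly uncountable $G^*$ via countable supports of Fourier coefficients is the right touch. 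The only point worth making explicit is that $f$ preserves Haar measure (the pushforward $\mu\circ f^{-1}$ is a translation-invariant probability measure, hence equals $\mu$ by uniqueness), which is what legitimizes treating $U_f$ as a unitary operator on $L^2(G,\mu)$ and speaking of ergodicity and mixing with respect to $\mu$ in the first place.
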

In the case of the abelian group $\T^n=\R^n/\Z^n$, this theorem is equivalent to the statement
that the automorphism $L_A$ is ergodic if and only if none of the eigenvalues of
the matrix $A$ is a root of unity. This implies, in particular, that Anosov automorphisms are
ergodic, since all eigenvalues of $A$ are outside of the unit circle.

In fact, the following assertions are equivalent \cite{Katok}:
\begin{itemize}
	\item the automorphism $f_A$ is ergodic with respect to Lebesgue measure;
	\item the set of periodic points of $f_A$ coincides with the set of points in $\T^n$
	with rational coordinates;
	\item none of the eigenvalues of the matrix $A$ is a root of unity;
	\item the matrix $A$ has at least one eigenvalue of absolute value greater than one and has
	no eigenvector with all rational coordinates;
	\item all orbits of the dual map $A^*: \Z^n \to \Z^n$, besides the zero orbit,
	are infinite.
\end{itemize}
	One more result about torus automorphisms is due to Bowen \cite{Bowen} and
allows one to calculate its topological entropy.
\begin{theorem}[Bowen]
	If $L_A: \R^n\to \R^n$ is a linear map generated by a unimodular integer matrix
	$A$ with eigenvalues $\lambda_1,\dots, \lambda_n$, then
	$$
	h_d(f_A)=\sum_{|\lambda_i|>1} \log|\lambda_i|.
	$$
\end{theorem}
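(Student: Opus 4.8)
The plan is to compute $h_d(f_A)$ directly from Bowen's definition, via $(m,\epsilon)$-separated and $(m,\epsilon)$-spanning sets for the flat metric $d$ on $\T^n$ inherited from the Euclidean metric on $\R^n$; this is legitimate since topological entropy does not depend on the choice of metric inducing the topology. Write $f=f_A$, let $p\colon\R^n\to\T^n$ be the covering projection, and for $m\ge 1$ set $B_m(x,\epsilon)=\bigcap_{j=0}^{m-1}f^{-j}B(f^jx,\epsilon)$, the dynamical $\epsilon$-ball of order $m$. Because $f$ is a group automorphism and $d$ is translation invariant, $B_m(x,\epsilon)=x+B_m(0,\epsilon)$, so $V_m(\epsilon):=\operatorname{vol}B_m(x,\epsilon)$ is independent of $x$. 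The whole argument reduces to the two-sided estimate
\[
c_1(\epsilon)\,m^{-q}\Bigl(\textstyle\prod_{|\lambda_i|>1}|\lambda_i|\Bigr)^{-m}\ \le\ V_m(\epsilon)\ \le\ c_2(\epsilon)\Bigl(\textstyle\prod_{|\lambda_i|>1}|\lambda_i|\Bigr)^{-m},
\]
valid for all small $\epsilon>0$, where $q=q(A)$ depends only on the Jordan structure of $A$ and $c_1,c_2>0$ are independent of $m$. Granting this, the theorem is immediate from soft counting: a maximal $(m,\epsilon)$-separated set $E$ has $\{B_m(x,\epsilon/2):x\in E\}$ pairwise disjoint, so $\#E\le V_m(\epsilon/2)^{-1}$, while any $(m,\epsilon)$-spanning set $F$ has $\{B_m(x,\epsilon):x\in F\}$ covering $\T^n$, so $\#F\ge V_m(\epsilon)^{-1}$; since $E$ is also spanning, $V_m(\epsilon)^{-1}\le r_m(\epsilon)\le s_m(\epsilon)\le V_m(\epsilon/2)^{-1}$, and the estimate above forces $\tfrac1m\log s_m(\epsilon)\to\sum_{|\lambda_i|>1}\log|\lambda_i|$, which by definition is $h_d(f_A)$.

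To prove the volume estimate, the first step is a ``no-wrapping'' lemma: there is $\epsilon_0=\epsilon_0(A)>0$ such that for $0<\epsilon<\epsilon_0$ one has $B_m(0,\epsilon)=p(\widetilde B_m(\epsilon))$ with $p$ injective on $\widetilde B_m(\epsilon)$, where $\widetilde B_m(\epsilon)=\{v\in\R^n:\|A^jv\|<\epsilon,\ 0\le j\le m-1\}$. The inclusion $p(\widetilde B_m(\epsilon))\subseteq B_m(0,\epsilon)$ is trivial; for the converse, given $p(v)\in B_m(0,\epsilon)$ pick the representative $v$ with $\|v\|<\epsilon$ and induct on $j$: from $\|A^{j-1}v\|<\epsilon$ we get $\|A^jv\|<\|A\|\,\epsilon$, so the integer vector nearest to $A^jv$ has norm $<(1+\|A\|)\epsilon$, which is $<1$ once $\epsilon<\epsilon_0:=\tfrac1{2(1+\|A\|)}$; being an integer vector, it is $0$, hence $\|A^jv\|<\epsilon$. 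Consequently $V_m(\epsilon)=\operatorname{vol}_{\R^n}\widetilde B_m(\epsilon)$ for $\epsilon<\epsilon_0$.

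The second step evaluates $\operatorname{vol}_{\R^n}\widetilde B_m(\epsilon)$ using the $A$-invariant splitting $\R^n=E^-\oplus E^0\oplus E^+$ into the sums of generalized eigenspaces with $|\lambda|<1$, $|\lambda|=1$, $|\lambda|>1$ (a real splitting, since complex eigenvalues come in conjugate pairs). In coordinates adapted to this splitting, $\widetilde B_m(\epsilon)$ is, up to fixed multiplicative constants, a product of the corresponding sets in the three subspaces. On $E^-$ all $\|A^jv\|$ are bounded by a constant times $\|v\|$, so that factor has volume $\asymp\epsilon^{\dim E^-}$; on $E^0$ they grow at most polynomially in $j$, so that factor has volume between a constant and $c\,\epsilon^{\dim E^0}m^{-q}$; on $E^+$ the inverse is eventually contracting, so the binding constraint is $\|A^{m-1}v\|<\epsilon$, and since $|\det(A|_{E^+})|=\prod_{|\lambda_i|>1}|\lambda_i|$ this factor has volume $\asymp\bigl(\prod_{|\lambda_i|>1}|\lambda_i|\bigr)^{-(m-1)}$. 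Multiplying the three gives the asserted bound for $V_m(\epsilon)$, hence $\tfrac1m\log V_m(\epsilon)\to-\sum_{|\lambda_i|>1}\log|\lambda_i|$.

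I do not expect a deep obstacle. The one genuinely non-formal ingredient is the no-wrapping lemma — it is what prevents $B_m(0,\epsilon)$ on the torus from being strictly larger than its Euclidean counterpart, which is exactly the control needed for the upper bound on $V_m$ (equivalently, the lower bound on entropy). The rest is bookkeeping that requires care but not ideas: tracking the polynomial-in-$m$ Jordan-block factors on $E^0$, and controlling how the passage to the (generally non-orthogonal) adapted coordinates rescales volumes and the constant $\epsilon_0$ — none of which affects the exponential rate entering $h_d$. One should also note once that using the flat metric is harmless since topological entropy is metric-independent, and dispose separately of the degenerate cases ($E^+=\{0\}$, which forces all $|\lambda_i|=1$ and $h_d=0$, and $E^0=\{0\}$, the Anosov case, where the polynomial factor disappears).
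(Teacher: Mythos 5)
This theorem is quoted in the paper as a classical result of Bowen, with a citation but no proof, so there is no in-paper argument to compare against; your proposal must be judged on its own, and it is essentially the standard (and correct) proof of the classical statement. The two pillars are right and are exactly where the content lies: the no-wrapping lemma (your induction with $\epsilon_0=\tfrac{1}{2(1+\|A\|)}$ is fine, and injectivity of $p$ on $\widetilde B_m(\epsilon)$ follows since two preimages would differ by a nonzero integer vector of norm $<2\epsilon<1$), and the volume asymptotics of $\widetilde B_m(\epsilon)$ via the splitting $E^-\oplus E^0\oplus E^+$, which pins the exponential rate at $\prod_{|\lambda_i|>1}|\lambda_i|$. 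Two points deserve a cleaner statement in a final write-up. First, on $E^+$ the claim that the last constraint $\|A^{m-1}v\|<\epsilon$ is binding needs the observation that $\sup_{k\ge 0}\|A^{-k}|_{E^+}\|<\infty$ (spectral radius of $A^{-1}|_{E^+}$ is $<1$, Jordan blocks notwithstanding), which is what lets you sandwich that factor between $c\,\epsilon^{\dim E^+}|\det(A|_{E^+})|^{-(m-1)}$ and the same expression with a different constant. Second, your sentence about the $E^0$ factor has the bounds stated in the wrong order and with an imprecise exponent; what you need (and what is true) is that this factor lies between $c\,\epsilon^{\dim E^0}m^{-q'}$ and $C\,\epsilon^{\dim E^0}$ for some fixed $q'$ depending on the Jordan structure, i.e.\ it is subexponential in $m$ and hence irrelevant to the rate. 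Likewise the passage to adapted (non-orthogonal) coordinates only changes $\epsilon$ and the volume by fixed constants, as you note. With those small repairs the counting step $V_m(\epsilon)^{-1}\le r_m(\epsilon)\le s_m(\epsilon)\le V_m(\epsilon/2)^{-1}$ gives $h_d(f_A)=\sum_{|\lambda_i|>1}\log|\lambda_i|$ as claimed; note also that in the setting of this paper ($A$ with simple eigenvalues) the polynomial factors disappear entirely, so the argument simplifies further.
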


When the dimension of the torus is even, say $n = 2m$, then we can introduce the standard
symplectic structure on $\T^{2m}$ using coordinates in $\R^{2m}:
\Omega = dx_1\wedge dx_{m+1} + \dots + dx_m \wedge dx_{2m}$ and consider symplectic
automorphisms of the torus which preserve this symplectic structure. A symplectic automorphism
$f_A$ is then defined by a symplectic matrix $A$ with integer entries. Such matrices satisfy
the identity $A^\top JA = J$, where the skew-symmetric matrix $J$ has the form
\begin{equation}\label{st}
J =
\begin{pmatrix}
	O & E \\
	-E & O
\end{pmatrix}
\end{equation}
where $E$ is the $m \times m$ identity matrix. The identity $A^\top JA = J$ implies the product
of two symplectic matrices and the inverse matrix of a symplectic matrix are symplectic,
i.e. symplectic matrices form a subgroup $\Sp(n,\R)$ of $\GL(n,\R)$ being one of the standard
matrix Lie groups \cite{Chevalley}.

Another (non-standard) symplectic structure on $\T^{2m}$ can also be defined as follows.
Choose a non-degenerate skew-symmetric $2m\times 2m$ matrix $J$: $J^\top = -J$.
Then we have a bilinear 2-form $[x, y] = (Jx, y)$ on $\R^{2m}$, where $(\cdot, \cdot)$
is the standard inner product. This form is sometimes called the skew inner product
\cite{Arn}. A linear map $S: \R^{2m}\to \R^{2m}$ is called symplectic, if for all
$x, y \in \R^{2m}$ the equality $[Sx, Sy] = [x, y]$ holds. Using the representation of
the skew inner product via $J$ and properties of the inner product, we obtain the following
identity for the matrix $S$ of the symplectic map: $S^\top JS=J$. This construction allows
us to define a symplectic structure on the torus, if the matrices $J$ and $S$ have integral
entries. The unimodularity of $S$ follows from its symplecticity. Then a symplectic 2-form
on the torus is given and the map $S$ defines a symplectic automorphism of the torus with
respect to this symplectic form. For example, let $B$ be a non-degenerate integer matrix. Having
the standard skew inner product $(Jx,y)$ in $\R^{2m}$ with $J$ (\ref{st}), we can define
the new skew inner product $[x,y]=(JBx,By)=(B^\top JBx,y)$. Since the matrix $B^\top JB$
has integer entries and is non-degenerate skew-symmetric, this skew inner product generates
a symplectic 2-form on the torus. Henceforth, we study the case $n=6$, i.e. $\mathbb T^6.$

Let $P(x)=\lambda^6+a\lambda^5+b\lambda^4+c\lambda^3+b\lambda^2+a\lambda+1$ be a self-reciprocal
polynomial with integer coefficients $(a,b,c)$ being irreducible over field
$\Q$. The companion matrix of this polynomial $P$ has the form
	$$
A=
\begin{pmatrix}
	0 & 1 & 0 & 0 & 0 & 0 \\
	0 & 0 & 1 & 0 & 0 & 0 \\
	0 & 0 & 0 & 1 & 0 & 0 \\
	0 & 0 & 0 & 0 & 1 & 0 \\
	0 & 0 & 0 & 0 & 0 & 1 \\
	-1 &-a & -b & -c & -b & -a
\end{pmatrix}
$$
This matrix is unimodular ($\det A = 1$) but not always symplectic with
respect to the standard symplectic 2-form $[x, y] = (Ix, y)$ for $x, y\in \R^6$ with $I$ \ref{st}. Let us show,
however, that $A$ is symplectic with respect to the nonstandard symplectic structure on $\R^6$
defined by $[x, y] = (Jx, y)$ for $x, y\in\R^6$ for some integer
non-degenerate skew-symmetric matrix $J$. Rewrite the matrix identity
$A^\top JA=J$ (for unknown $J$) as  $A^\top J-JA^{-1}=0$.
A solution $J$ of this matrix equation is
\begin{equation}\label{nonst}
J=
\begin{pmatrix}
	0  & 0    & 1      & 1     & 1   & 0 \\
	0  & 0    & a      & a+1   & a+1 & 1 \\
	-1 & -a   & 0      & a+b-c & a+1 & 1 \\
	-1 & -a-1 & -a-b+c & 0     & a   & 1 \\
	-1 & -a-1 & -a-1   & -a    & 0   & 0 \\
	0  & -1   & -1     & -1    & 0   & 0
\end{pmatrix}.
\end{equation}
Its determinant $\det J = (a+b-c-2)^2$ is non-degenerate, if $a+b-c\ne 2,$ then $A$
is symplectic with respect to this nonstandard symplectic structure.
This will be used later on.

Recall the notion of partially hyperbolic diffeomorphisms of a smooth manifold
$M$, they were first introduced and studied in \cite{BP}. Here we use a modification of
the definition in \cite{Hertz}. Let $L$ be a linear transformation between two normed
linear spaces. The norm, respectively co-norm, of $L$ are defined as
$$
||L||:=\sup_{||v|| = 1} ||Lv||,  \quad m(L):=\inf_{||v|| = 1} ||Lv||.
$$
\begin{definition}
A diffeomorphism $f:M \to M$ is {\it partially hyperbolic}, if there is a continuous
	$Df$-invariant splitting $TM = E^u\oplus E^c\oplus E^s$ in which $E^u$ and $E^s$ are
nontrivial sub-bundles and
	$$m(D^uf)>||D^cf||\ge m(D^cf)>||d^sf||, \ \ \ \ m(D^uf)>1>||D^sf||,$$
where $D^\sigma f$ is the restriction of $Df$ to $E^\sigma$ for $\sigma=s,c$ or $u$.	
\end{definition}

In \cite{LT}, we presented a classification of automorphisms $f_A$ of the four-dimensional
torus $\T^4$ generated by symplectic integer matrices $A\in \Sp(4,\Z)$. These automorphisms can
possess either a transitive unstable one-dimensional foliation or they are decomposable. In the
first case two such automorphisms are (topologically) conjugate, if their matrices are
integrally similar (conjugate in $\M_n(\Z)$).
In the second case, a decomposable $f_A$ is conjugate to the direct product of two
$2$-dimensional automorphisms acting on $\T^2\times\T^2$, one of which is an Anosov
automorphism of the 2-torus and another one is periodic on a 2-torus.

It is natural to extend these results to the higher-dimensional case.
This is more involved problem because several different possibilities can occur: the dimensions
of a center sub-bundle and stable/unstable sub-bundles can vary even for a torus of a fixed
dimension. For instance, for symplectic automorphisms on $\T^6$, we have symplectic partially hyperbolic integer matrices with
$$ \bullet \quad \dim\, W^c = 4, \quad \dim\, W^s,\;  W^u = 1; $$
$$ \bullet \quad \dim\, W^c = 2, \quad \dim\, W^s,\;  W^u = 2.$$

A symplectic matrix generates a partially hyperbolic automorphism of the torus, if its
eigenvalues are both outside the unit circle and on the unit circle. The topological
classification of such automorphisms is determined in the first turn by the topology of
a foliation generated by unstable (stable) leaves of the automorphism, since this foliation
is invariant with respect to the action of $f$, and also by the action of
$f$ on the center submanifold and its extension. Hence, the structure of stable and unstable
foliations has to be investigated first for the classification
problem. Recall how stable, unstable and center foliation are generated
for the case of symplectic automorphisms of $\T^6$. A symplectic $6\times 6$ matrix $A$
has a decomposition of its spectrum into three parts:
$Sp(A) =\sigma_s\cup\sigma_c\cup\sigma_u$ where eigenvalues in $\sigma_s$ lie within
the unit circle, of $\sigma_c$ are on the unit circle and those in $\sigma_u$ are out of
the unit circle. We assume below that all eigenvalues are simple.

In this paper, we study automorphisms of the 6-torus generated by integer matrices with
$\dim W^c =2, \dim W^s = 2,$ and $\dim W^u = 2$. In this case the subspace $W^u$ in $\R^6$ is
two-dimensional. Here there are two different cases: 1) $\sigma_u$ consists of two simple real
eigenvalues with their related independent real eigen-spaces; 2) $\sigma_u$ consists of two
complex conjugate eigenvalues and $W^u$ is the invariant subspace w.r.t.
the action of $L_A$, as two independent vectors in this subspace one can choose real
and imaginary parts of the complex eigenvector of the matrix $A.$

The projection of $W^u$ onto the torus gives an embedded plane in $\T^6$ (it is the unstable
manifold of the fixed point $\hat{O}$), other classes of $\R^6/W^u$ are affine
2-planes, their projections give the unstable foliation for $f_A.$
Analogously, we get the stable foliation and the center foliation. More details on the
structure of these foliations will be presented in Sec.\ref{sec-2dim}.

The automorphism $f_A$ is an isomorphism of the abelian group $\T^n = \R^n/\Z^n$.
Every isomorphism of the topological group $\T^n$ in standard angular coordinates
is given by an integer unimodular matrix. The topological classification of automorphisms
of the group $\T^n$ is determined by the similarity of the related matrices, due to Arov's
theorem \cite{Arov}.
\begin{theorem}[Arov]
Two automorphisms $T$ and $P$ of a compact abelian group $G$ are topologically conjugate if
and only if they are isomorphic, that is, there is an isomorphism $Q: G \to G$ such that
$Q\circ T=P\circ Q$.
\end{theorem}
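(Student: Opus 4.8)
The plan is to prove both implications, with essentially all the work in one of them. The direction ``isomorphic $\Rightarrow$ topologically conjugate'' needs nothing: an isomorphism $Q:G\to G$ of topological groups is in particular a homeomorphism, and $Q\circ T=P\circ Q$ is exactly the statement that $Q$ conjugates $T$ to $P$. For the converse, since in this paper the theorem is applied only to $G=\T^n$, I would give the argument there in detail and then treat the general case by the same scheme, with the discrete Pontryagin dual playing the role of the fundamental group.

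So suppose $h:\T^n\to\T^n$ is a homeomorphism with $h\circ f_A=f_B\circ h$, where $A,B\in\GL(n,\Z)$ are the defining matrices of $T=f_A$ and $P=f_B$. The first step is to normalise $h$ so that $h(0)=0$: since $f_A(0)=0$, the point $h(0)$ is fixed by $f_B$, and because $f_B$ is a homomorphism the map $x\mapsto h(x)-h(0)$ is still a homeomorphism conjugating $f_A$ to $f_B$, and it now sends $0$ to $0$. The second step is to pass to the universal cover: lift $h$ to $\tilde h:\R^n\to\R^n$ with $\tilde h(0)=0$, so that $\tilde h(x+k)=\tilde h(x)+Mk$ for all $x\in\R^n$ and $k\in\Z^n$, where $M=h_*\in\GL(n,\Z)$ is the automorphism induced on $\pi_1(\T^n)=\Z^n$; in particular $\tilde h(k)=Mk$ for integer $k$. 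The relation $h\circ f_A=f_B\circ h$ lifts to $\tilde h\circ L_A=L_B\circ\tilde h+v$ for some $v\in\Z^n$, and evaluating at $x=0$ forces $v=0$. The third step reads off the conclusion: applying $\tilde h\circ L_A=L_B\circ\tilde h$ to an arbitrary $k\in\Z^n$ gives $M(Ak)=B(Mk)$, hence $MA=BM$, i.e. $A$ and $B$ are conjugate in $\GL(n,\Z)$. Then $L_M:\T^n\to\T^n$ is a group automorphism satisfying $L_M\circ f_A=f_B\circ L_M$, so $Q=L_M$ is the required isomorphism. (Conversely, integral similarity of $A$ and $B$ trivially furnishes such a $Q$, which re-proves the easy direction in the toral setting.)

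For a general compact abelian group $G$ I would run the same three steps with the discrete Pontryagin dual $G^*$ in place of $\pi_1(\T^n)$: automorphisms of $G$ correspond contravariantly to automorphisms of $G^*$, a topological conjugacy normalised to fix the identity of $G$ induces a well-defined automorphism of $G^*$ (functorially, through its action on cohomology, or on the dual directly), and chasing the conjugacy relation shows that this automorphism intertwines $P^*$ and $T^*$; dualising back produces $Q$.

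The main obstacle is unevenly distributed. In the toral case it is merely the lift bookkeeping in the first two steps — arranging $h(0)=0$ and showing the residual deck translation $v$ vanishes — after which everything is formal linear algebra. The genuine difficulty lies in the general case, where one must argue that a purely topological conjugacy, once based, acts functorially and homomorphically on the dual group (equivalently, on cohomology): this is the point at which topology is converted into algebra. That refinement, however, is not needed anywhere in the present paper, where only $G=\T^n$ occurs.
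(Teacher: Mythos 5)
Your toral argument is correct, and it is exactly the argument this paper actually relies on: the paper does not prove the stated theorem at all (it is quoted from Arov's 1963 paper), and the only place a proof-like argument appears is Section 5, where a conjugating homeomorphism $h$ is said to induce an automorphism $H$ of the fundamental group $\Z^6$ with $H\circ A=A'\circ H$, and conversely integral similarity yields the conjugacy $f_H$. Your normalization $h(0)=0$, the lift $\tilde h$ with $\tilde h(x+k)=\tilde h(x)+Mk$, the vanishing of the deck translation $v$, and the conclusion $MA=BM$ is a careful spelled-out version of that same fundamental-group (equivalently $H_1$) functoriality argument, so for $G=\T^n$ there is nothing to object to.

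However, as a proof of the theorem as stated --- for an arbitrary compact abelian group $G$ --- your proposal has a genuine gap, and it is not merely a ``refinement''. The Pontryagin--duality/cohomology scheme you sketch uses the natural isomorphism $\check H^1(G;\Z)\cong G^*$, which holds for compact \emph{connected} abelian groups but fails for disconnected ones (for finite $G$ the left side vanishes while $G^*\cong G$). Worse, for disconnected $G$ the statement in the naive form cannot be recovered by any scheme: on $G=\Z/8\Z$ the automorphisms $x\mapsto 3x$ and $x\mapsto 7x$ have the same cycle structure, hence are conjugate by a homeomorphism of the discrete space, but the automorphism group of $\Z/8\Z$ is abelian, so they are not conjugate by a group automorphism. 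Thus the general case requires connectedness-type hypotheses and the actual analysis in Arov's paper, not just ``the same three steps''; your proof establishes only the special case $G=\T^n$ (which, as you note, is the only case the paper uses).
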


The structure of the paper is as follows. In Section 2, we describe invariant foliations
related to such automorphisms. In Section 3, we construct examples of partially hyperbolic
symplectic automorphisms with transitive unstable foliations. In Section 4, we obtain
a classification of symplectic automorphisms with transitive foliations on $\T^6$.

\section{Automorphisms with 2-dimensional unstable \\foliation}\label{sec-2dim}

Here we study symplectic partially hyperbolic automorphisms when the dimension of $W^u$
(and $W^s$) equals two. This corresponds to those symplectic $(6\times 6)$-matrices which
have two eigenvalues $\lambda_1,\lambda_2$ outside of the unit disk in $\CC$, two eigenvalues
$\lambda^{-1}_1,\lambda^{-1}_2$ inside of this disk and two complex conjugate eigenvalues
on the unit circle. In its turn, these $\lambda_1,\lambda_2$ can be either two different
real ones or a pair of complex conjugate numbers. Respectively, $W^u$ is spanned either
by two independent real eigenvectors $\gamma_1^u, \gamma_2^u$ for real eigenvalues, or it
is an invariant 2-dimensional real subspace corresponding to complex conjugate eigenvalues.
In the latter case the invariant subspace is generated by real and complex parts $\gamma_r^u,
\gamma_i^u$ of the complex eigenvector $\gamma_r^u + i\gamma_i^u$, these are two linear
independent real vectors.

Projection by $p$ of $W^u$ on $\T^6$ generates related unstable foliation in $\T^6.$ In order
to determine how leaves of this foliation behave in $\T^6$, we observe that this foliation is
formed by orbits of the action of the group $\R^2$ on $\T^6$ generated by two commuting
constant vector fields on $\T^6$ given as $\gamma_1^u\cdot\partial/\partial \theta$ and
$\gamma_2^u\cdot\partial/\partial \theta,$ $\theta = (\theta_1,\ldots,\theta_6).$
The related orbits are given as
$$
\theta (t_1,t_2) = t_1\gamma_1^u + t_2\gamma_2^u
+\theta_0 \;(\mbox{modd}\;1).
$$
In particular, for $\theta_0 = \hat{O}$ we have the orbit through the
fixed point $\hat{O}$ of the automorphism. The action and the shift on the
torus commute.

Recall the Kronecker theorem \cite{Kronecker} (see also \cite{Bourbaki}, chapter VII-1 and
\cite{GM}) and its corollary. In their formulations we use the common notations:
$(\cdot,\cdot)$ for the standard inner product in $\R^n$ and $||\cdot||$ for the maximum norm
of a vector in $\R^n.$
\begin{theorem}[Kronecker]
Let in $\R^n$ vectors $\bf{a}_i$, $1\le i\le m$, and a vector $\bf{b}$ be given.
In order for any $\eps > 0$ there exist $m$ numbers $t_i\in \R$ and an integer vector
$\bf{p} \in \Z^n$ such that
$$
||\sum\limits_{i=1}^m t_i\bf{a}_i - \bf{p} -\bf{b}|| \le \eps,
$$
it is necessary and sufficient that for any integer vector $\bf{r}\in \Z^n$, such that
all $m$ equalities $(\bf{r},\bf{a}_i)=0$ hold, the relation $(\bf{r},\bf{b}) = 0$ also holds.
\end{theorem}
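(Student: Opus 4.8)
This is a classical result, and the plan is to prove it by reducing the Diophantine statement to a membership question inside the torus $\T^n$ and then applying Pontryagin duality. Work in the compact abelian group $\T^n=\R^n/\Z^n$, write $p:\R^n\to\T^n$ for the projection, and recall that the character group of $\T^n$ is $\Z^n$, the character attached to $\mathbf r\in\Z^n$ being $\chi_{\mathbf r}(p(x))=e^{2\pi i(\mathbf r,x)}$. \emph{Reformulation.} As $t_1,\dots,t_m$ run over $\R$ and $\mathbf p$ over $\Z^n$, the vectors $\sum_i t_i\mathbf a_i-\mathbf p$ reduce modulo $\Z^n$ to precisely the subgroup $G:=p(V)\subseteq\T^n$, where $V=\operatorname{span}_\R\{\mathbf a_1,\dots,\mathbf a_m\}$. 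Hence, for a given $\mathbf b$, the approximation condition of the theorem (holding for every $\eps>0$) is equivalent to $p(\mathbf b)\in\overline{G}$.

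\emph{Necessity.} If $\mathbf r\in\Z^n$ and $(\mathbf r,\mathbf a_i)=0$ for all $i$, then $\chi_{\mathbf r}$ equals $1$ identically on $G=p(V)$, hence on $\overline{G}$; evaluating at $p(\mathbf b)\in\overline{G}$ gives $e^{2\pi i(\mathbf r,\mathbf b)}=1$, that is, $(\mathbf r,\mathbf b)\in\Z$, which is the asserted conclusion. (Without characters one obtains the same from $|(\mathbf r,\mathbf p)+(\mathbf r,\mathbf b)|=|(\mathbf r,\sum_i t_i\mathbf a_i-\mathbf p-\mathbf b)|\le(\sum_j|r_j|)\,\eps$: letting $\eps\to0$ forces $(\mathbf r,\mathbf b)$ to lie within arbitrarily small distance of an integer.)

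\emph{Sufficiency.} This is the substantive direction, and I would derive it from Pontryagin duality for $\T^n$: for every closed subgroup $H\le\T^n$ and every $y\in\T^n$,
$$ y\in H\iff \chi_{\mathbf r}(y)=1 \text{ for all } \mathbf r\in H^\perp, $$
where $H^\perp=\{\mathbf r\in\Z^n:\chi_{\mathbf r}|_H\equiv1\}$; equivalently, a nonzero element of $\T^n/H$ is detected by some character. Apply this with $H=\overline{G}$ and $y=p(\mathbf b)$. Since characters are continuous, $\overline{G}^{\,\perp}=G^\perp$, and $\chi_{\mathbf r}\equiv1$ on $G=p(V)$ exactly when the linear function $(t_1,\dots,t_m)\mapsto\sum_i t_i(\mathbf r,\mathbf a_i)$ is integer-valued on all of $\R^m$, i.e.\ when $(\mathbf r,\mathbf a_i)=0$ for every $i$. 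Therefore $\overline{G}^{\,\perp}=\{\mathbf r\in\Z^n:(\mathbf r,\mathbf a_i)=0,\ 1\le i\le m\}$, and $p(\mathbf b)\in\overline{G}$ holds if and only if $(\mathbf r,\mathbf b)\in\Z$ for every such $\mathbf r$, which is the required equivalence.

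\emph{Main obstacle.} The only nonelementary input is the duality fact that closed subgroups of a torus are annihilators of their annihilators, equivalently the Peter--Weyl/Stone--Weierstrass statement that every continuous function on $\T^n$ is a uniform limit of trigonometric polynomials (so a point outside a closed subgroup is separated from it by a character). If a self-contained treatment is wanted, one can instead observe that $\overline{G}$, being the closure of the continuous image of a linear subspace, is a connected closed subgroup of $\T^n$ and hence a coordinate subtorus after a suitable change of basis in $\GL(n,\Z)$; in such coordinates $\overline{G}=\{x:x_{k+1}=\dots=x_n=0\}$ and both sides of the equivalence reduce to the visibly equivalent conditions on $\mathbf b$. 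The reformulation, the necessity direction, and the identification of $G^\perp$ are all routine.
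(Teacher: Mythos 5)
Your argument is correct, but note that the paper itself does not prove this theorem: it is quoted as a classical result with references to Kronecker, Bourbaki (General Topology, Ch.~VII-1) and Gonek--Montgomery, followed only by an informal geometric discussion, so there is no in-paper proof to compare against. Your route --- reformulating the approximation condition as $p(\mathbf b)\in\overline{p(V)}$ for $V=\operatorname{span}_\R\{\mathbf a_1,\dots,\mathbf a_m\}$ and then invoking Pontryagin duality (a closed subgroup of $\T^n$ is the annihilator of its annihilator) to compute $\overline{p(V)}^{\perp}=\{\mathbf r\in\Z^n:(\mathbf r,\mathbf a_i)=0,\ 1\le i\le m\}$ --- is essentially the standard treatment in the cited Bourbaki reference, and your fallback (a closed connected subgroup of $\T^n$ is a subtorus, rectifiable to a coordinate subtorus by a $\GL(n,\Z)$ change of basis) is a legitimate way to make the duality step self-contained. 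The reformulation, the necessity direction, and the identification of the annihilator are all sound.

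One point deserves explicit mention rather than the phrase ``which is the asserted conclusion'': what your proof actually establishes in the necessity direction is $(\mathbf r,\mathbf b)\in\Z$, which is the standard form of Kronecker's theorem for real parameters $t_i$, whereas the statement as printed in the paper asserts $(\mathbf r,\mathbf b)=0$. As literally written the paper's necessity claim is too strong: taking $\mathbf b\in\Z^n$ nonzero with $(\mathbf r,\mathbf b)\neq 0$ for some $\mathbf r$ orthogonal to all $\mathbf a_i$ gives exact approximation (choose $t_i=0$, $\mathbf p=-\mathbf b$) while violating $(\mathbf r,\mathbf b)=0$; the condition must be read modulo $\Z$ (equivalently, for $\mathbf b$ taken modulo $\Z^n$), since the left-hand condition is invariant under $\mathbf b\mapsto\mathbf b+\mathbf q$, $\mathbf q\in\Z^n$. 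This does not affect the paper's application (Corollary~\ref{cor} and the transitivity criterion), where one only needs the sufficiency direction in the resonance-free case, but your write-up should state the $\in\Z$ normalization explicitly instead of silently identifying it with the printed conclusion.
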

Let us briefly discuss the geometric sense of this theorem. To be closer to the case under
consideration, we assume vectors $\bf{a}_i$ be independent, in particular, $m \le n.$
The linear combination $\sum_i t_i\bf{a}_i \in \R^n$ with real $t_i,$ $1\le i \le m,$ belongs
to the linear subspace of $\R^n$ spanned on vectors $\{\bf{a}_1,\ldots,\bf{a}_m\}.$ When
$m$-tuple $t = (t_1,\ldots,t_m)$ runs all $\R^m,$ we get the whole subspace. Adding
(or subtracting) vectors $\bf{p}\in \Z^n$ shifts this subspace to any point of the lattice
$\Z^n\subset \R^n.$ When $\bf{p}$ run all $\Z^n$, this gives some set $\mathcal L_a$ in
$\R^n$ and the question is: if vectors of $\mathcal L_a$ approximates a given vector
$\bf{b}\in \R^n$ with an arbitrary preciseness? Or, in other words, when $\bf{b}$ belongs to
the closure $\overline{\mathcal L_a}$? The Kronecker's theorem gives the exhausting answer
to this question:
if there is an integer vector $\bf{r}\in \Z^n$ such that all $(\bf{r},\bf{a}_i)=0$, $1\le i\le
m,$ then $\bf{b}$ has to belong to the plane $(\bf{r},\bf{x})=0$. If there
are several independent such vectors $\bf{r}$, then $\bf{b}$ has to belong to
all of them.

Observe that the equality $(\bf{r},\bf{x})=0$, $\bf{x}\in \R^6,$ with integer vector $\bf{r}$
defines a torus $T^5 \subset \T^6$. So, any such equality defines a torus.
If there are two such equalities with independent $\bf{r}_1$ and $\bf{r}_2$ over $\Q$, then
two 5-dimensional planes in $\R^6$ intersect each other transversely
along a 4-dimensional plane, therefore related 5-dimensional tori in $\T^6$ intersect each
other transversely and provide a 4-dimensional torus. Orbits of the action
of $\R^2$ lie in this 4-torus.

If such integer vector $\bf{r}$ exists, we can call the equality as a {\em resonance relation}
for the collection $\{\bf{a}_1,\ldots,\bf{a}_m\}.$ The number of linearly independent
(over $\Q$) such relations can be called the degree of the degeneration for the resonance.
So, if none such relations exist, one can take any $\bf b\in \R^n$ and the following statement
is valid
\begin{corollary}\label{cor}
In order for any $\bf{x}\in \R^n$ and any $\eps>0$ there exist $m$ real
numbers $t_1,\ldots,t_m$ and an integer vector $\bf{p}$ such that
$$
\left|\left|\sum\limits_{i=1}^m t_i\bf{a}_i - \bf{p} - \bf{x}\right|\right| \le \eps,
$$
it is necessary and sufficient that there is no nonzero integer vector $\bf{r}\in \Z^n$
such that $(\bf{r},\bf{a}_i)=0$ for all $i.$
\end{corollary}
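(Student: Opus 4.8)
The plan is to obtain Corollary \ref{cor} as an immediate specialization of Kronecker's theorem, taking the target vector $\mathbf{b}$ there to be an arbitrary $\mathbf{x}\in\R^n$. The whole content is that requiring the approximation property to hold \emph{simultaneously for every} $\mathbf{x}$ is exactly what forces the resonance hypothesis of Kronecker's theorem to become vacuous, i.e.\ forces the nonexistence of a nonzero integer relation vector $\mathbf{r}$.

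For sufficiency, suppose there is no nonzero $\mathbf{r}\in\Z^n$ with $(\mathbf{r},\mathbf{a}_i)=0$ for all $i$. Fix an arbitrary $\mathbf{x}\in\R^n$ and $\eps>0$ and apply Kronecker's theorem with $\mathbf{b}=\mathbf{x}$. Its necessary and sufficient condition reads: every integer vector $\mathbf{r}$ annihilating all the $\mathbf{a}_i$ also satisfies $(\mathbf{r},\mathbf{x})=0$. Under our hypothesis the only such $\mathbf{r}$ is $\mathbf{0}$, for which $(\mathbf{0},\mathbf{x})=0$ holds trivially, so the condition is satisfied and the required $t_1,\dots,t_m$ and $\mathbf{p}$ exist. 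Since $\mathbf{x}$ and $\eps$ were arbitrary, the approximation property holds for all $\mathbf{x}\in\R^n$.

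For necessity I would argue by contraposition. Suppose some nonzero $\mathbf{r}_0\in\Z^n$ satisfies $(\mathbf{r}_0,\mathbf{a}_i)=0$ for all $i$. Pick $\mathbf{x}_0\in\R^n$ with $(\mathbf{r}_0,\mathbf{x}_0)\neq 0$, which is possible as $\mathbf{r}_0\neq\mathbf{0}$. Applying Kronecker's theorem with $\mathbf{b}=\mathbf{x}_0$, the necessary condition for approximability fails (here $\mathbf{r}_0$ annihilates all $\mathbf{a}_i$ but not $\mathbf{x}_0$), so there exists $\eps>0$ for which no choice of $t_i\in\R$ and $\mathbf{p}\in\Z^n$ gives $\|\sum_i t_i\mathbf{a}_i-\mathbf{p}-\mathbf{x}_0\|\le\eps$. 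Hence the approximation property fails for this particular $\mathbf{x}_0$, and in particular does not hold for all $\mathbf{x}\in\R^n$.

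The argument is essentially a bookkeeping of quantifiers, and I do not anticipate a serious obstacle; the only point needing (minor) care is the logical orientation inside Kronecker's theorem — that $(\mathbf{r},\mathbf{a}_i)=0$ is the hypothesis and $(\mathbf{r},\mathbf{b})=0$ the conclusion — which is precisely what lets an unconstrained choice $\mathbf{b}=\mathbf{x}$ rule out every nontrivial resonance vector.
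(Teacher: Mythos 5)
Your proposal is correct and matches the paper's treatment: the paper derives Corollary \ref{cor} as an immediate specialization of Kronecker's theorem (it gives no separate formal proof beyond the preceding geometric discussion), which is exactly the quantifier bookkeeping you carry out, including the contrapositive argument for necessity.
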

This corollary simultaneously gives the criterion when the leaves of the unstable
foliation are transitive in $\T^6.$ Indeed, suppose the conditions of the
corollary to hold for two independent vectors $\bf{a}_1,\bf{a}_2$ in $W^u.$ Then the
subspace in $\R^6$ generated by these two vectors, along with all its shifts by $\Z^6$,
is dense in $\R^6$ and hence $p(W^u)$ is dense when projecting onto $\T^6$.

For the classification problem we have to prove an analog of the Proposition 1 in \cite{Tr}.
As we know, the leaves of the unstable foliation are simultaneously orbits of the action of
the group $\R^2$ generated by two commuting vector fields corresponding to either
eigenvectors of two real eigenvalues greater than one or vectors being
the real and imaginary parts of the complex eigenvector of the eigenvalue greater
in modulus than one.
\begin{proposition}
The closure of the 2-dimensional unstable leaf of the fixed point $\hat{O}$ for $f_A$
is either the whole	$\T^6$, or a four-dimensional torus, in the first case the leaf
is transitive.
\end{proposition}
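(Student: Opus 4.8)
The plan is to apply Kronecker's theorem and its Corollary \ref{cor} to the pair of vectors $\mathbf{a}_1,\mathbf{a}_2$ spanning $W^u$, and to extract structural information from the lattice of resonance relations. First I would consider the set $\mathcal{R}=\{\mathbf{r}\in\Z^6:(\mathbf{r},\mathbf{a}_1)=(\mathbf{r},\mathbf{a}_2)=0\}$ of integer resonance vectors for the collection $\{\mathbf{a}_1,\mathbf{a}_2\}$. This is a subgroup of $\Z^6$, hence a free abelian group of some rank $k$, and I would show $k\le 2$: since $\mathbf{a}_1,\mathbf{a}_2$ are linearly independent, the real orthogonal complement $W^u{}^{\perp}$ has dimension $4$, and $\mathcal{R}$ lies in this complement, so $\mathrm{rk}\,\mathcal{R}\le 4$ is immediate, but the key point is to rule out $k=3,4$ using the fact that $W^u$ is $A$-invariant with eigenvalue(s) off the unit circle. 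If $\mathbf{r}\in\mathcal{R}$, then $A^\top\mathbf{r}$ annihilates $A\mathbf{a}_i$, i.e.\ it again annihilates $W^u$ (since $W^u$ is $A$-invariant), so $\mathcal{R}$ is invariant under $A^\top$; the integer span of $\mathcal{R}$ is then an $A^\top$-invariant rational subspace, and because the characteristic polynomial of $A$ (restricted to the relevant factors) is irreducible over $\Q$ — or more precisely because the eigenvalues in $\sigma_u$ are not roots of unity and $W^u$ carries no rational vectors — the only $A^\top$-invariant rational subspaces transverse to $W^u$ that can occur have dimension $0$, $2$, or $4$; one then argues $k\ne 3$ because a rank-3 $A^\top$-invariant sublattice would force a rational eigendirection or an incompatible factorization of the characteristic polynomial. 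The upshot is $k\in\{0,2\}$ (the value $k=4$ being excluded because it would make $W^u$ itself rational, contradicting that its eigenvalues lie off the unit circle).

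Next I would treat the two resulting cases. If $k=0$, then by Corollary \ref{cor} the set $\mathcal{L}_a=\{t_1\mathbf{a}_1+t_2\mathbf{a}_2-\mathbf{p}:t_i\in\R,\mathbf{p}\in\Z^6\}$ is dense in $\R^6$, hence $p(W^u)$ — which is exactly the leaf through $\hat O$ — is dense in $\T^6$, and the leaf is transitive. If $k=2$, pick a $\Z$-basis $\mathbf{r}_1,\mathbf{r}_2$ of $\mathcal{R}$; these are independent over $\Q$, so the two hyperplanes $(\mathbf{r}_j,\mathbf{x})=0$ in $\R^6$ intersect transversely in a $4$-dimensional rational subspace $V$ containing $W^u$, and (as noted in the text before the Corollary) the corresponding $5$-tori in $\T^6$ intersect transversely in a $4$-dimensional subtorus $\mathbb{T}^4=p(V)$. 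By Kronecker's theorem itself, the closure of $\mathcal{L}_a$ is exactly $\{\mathbf{x}:(\mathbf{r}_1,\mathbf{x})=(\mathbf{r}_2,\mathbf{x})=0\}=V$ — the necessity direction of Kronecker says $\overline{\mathcal{L}_a}$ is contained in the intersection of these hyperplanes, and the sufficiency direction, applied inside $V$ to the vectors $\mathbf{a}_1,\mathbf{a}_2$ viewed in $V\cong\R^4$ (where they now have \emph{no} nonzero integer resonance vector, by maximality of $\mathcal{R}$), shows the closure fills all of $V$. Hence $\overline{p(W^u)}=p(V)=\mathbb{T}^4$.

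Finally I would record that this $\mathbb{T}^4$ is indeed a genuine embedded $4$-torus: $V$ is spanned by integer vectors (a rational subspace defined by integer equations), so $V\cap\Z^6$ is a rank-$4$ lattice and $p(V)=V/(V\cap\Z^6)$ is a $4$-dimensional subtorus, closed in $\T^6$. Combining the two cases gives the dichotomy: $\overline{p(W^u)}$ is either all of $\T^6$ (when $k=0$, and then the leaf is transitive) or a $4$-torus (when $k=2$).

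The main obstacle is the rigidity step: showing the resonance rank $k$ cannot equal $3$ (and cannot equal $1$, which by the same transversality count would give a $5$-dimensional invariant torus — this must also be excluded). I expect to handle this by exploiting that $\mathcal{R}$, or rather its rational span $W$, is $A^\top$-invariant, so $W$ is a sum of generalized eigenspaces of $A^\top$; since the eigenvalues come in the reciprocal-symmetric pattern $\{\lambda_1^{\pm1},\lambda_2^{\pm1},\mu,\bar\mu\}$ with $|\mu|=1$ and none a root of unity, an $A^\top$-invariant rational subspace must be a union of Galois-conjugate eigenvalue orbits, and the orbit structure forces $\dim W\in\{0,2,4,6\}$; the values $3$ and $5$ (hence $k=3,1$) are thereby impossible, and $\dim W=6$ is impossible because $W$ is transverse to the irrational subspace $W^u$. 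This uses the irreducibility/simplicity hypotheses on the characteristic polynomial stated earlier in the paper.
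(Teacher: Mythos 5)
Your overall strategy is sound and is, at bottom, a dual reformulation of what the paper does: the paper notes that the closure of the leaf is an invariant subtorus, so its lift is a rational $L_A$-invariant subspace containing $W^u$, and then rules out dimensions $3$ and $5$ --- immediately in the complex-unstable case (all six eigenvalues are non-real, hence no odd-dimensional invariant subspaces exist at all), and by reference to Proposition~1 of \cite{Tr} in the real-unstable case. Your version via the resonance lattice $\mathcal R$, its $A^\top$-invariance, and Corollary~\ref{cor} is equivalent and more self-contained, and your treatment of the cases $k=0$ and $k=2$ (density inside $V$, primitivity of $V\cap\Z^6$) is correct, up to the harmless slips that the closure of $\mathcal L_a$ in $\R^6$ is $V+\Z^6$ rather than $V$, and that ``the values $3$ and $5$ (hence $k=3,1$)'' should read $\dim W\in\{1,3\}$.

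The genuine weak point is exactly the step you flag as the main obstacle, and the justification you propose for it does not work as stated. You cannot lean on irreducibility of the characteristic polynomial or on ``none of the eigenvalues is a root of unity'': the proposition must also cover the decomposable automorphisms of Section~4, where $\chi_A$ is reducible and the center eigenvalues are roots of unity. Nor does ``a rational invariant subspace is a union of Galois orbits'' by itself force even dimension: nothing in the Galois-orbit structure forbids an odd-degree irreducible factor a priori, and in the real-unstable case non-rational $3$-dimensional $L_A$-invariant subspaces certainly exist, so integrality must enter somewhere. The clean argument (which your phrase ``incompatible factorization'' gestures at) is this: if $W$ is a rational $A^\top$-invariant subspace, then $A^\top$ restricts to an automorphism of the sublattice $W\cap\Z^6$, because $(A^{-1})^\top$ is also an integer matrix preserving $W$; hence the product of the eigenvalues of $A^\top$ on $W$ equals $\pm 1$. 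But the spectrum is $\{\lambda_1^{\pm1},\lambda_2^{\pm1},\mu,\bar\mu\}$ with $|\lambda_i|\ne 1$, $\mu$ non-real of modulus one, and every conjugation-closed subset of cardinality $1$ or $3$ has product of modulus $\ne 1$ (a singleton must be one of the real $\lambda_i^{\pm1}$; a triple is either three of the $\lambda_i^{\pm1}$ or one of them together with $\{\mu,\bar\mu\}$, and in each case the product has the modulus of some $\lambda_i^{\pm1}$). This kills $k=1,3$ in both cases. The same computation disposes of $k=4$: since $W\subseteq (W^u)^\perp$ and the eigenvalues are simple, a $4$-dimensional $W$ would carry exactly the eigenvalues $\{\lambda_1^{-1},\lambda_2^{-1},\mu,\bar\mu\}$ of $A^\top$, whose product $(\lambda_1\lambda_2)^{-1}$ has modulus less than one (equivalently, $W^u$ rational would make $A$ restricted to the rank-two lattice $W^u\cap\Z^6$ unimodular with both eigenvalues outside the unit circle). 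With this replacement for the Galois/irreducibility appeal, your proof closes and covers both the transitive and the decomposable situations.
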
\label{clos2}
\begin{proof} Let us consider first the case of complex conjugate complex
greater in modulus than one. In this case, the closure of the leaf of the fixed point
$\hat{O}$ for $f_A$ cannot be either a three-dimensional torus or a five-dimensional torus,
since the linear map $L_A$ in $\R^6$ has neither five-dimensional nor
three-dimensional invariant subspaces.

In the case of two real eigenvalues greater than one, the proof of the fact
that the closure  of the fixed point $\hat{O}$ for $f_A$ cannot be either a three-dimensional
torus or a five-dimensional torus is similar to the proof of Proposition 1 in \cite{Tr}.
\end{proof}

\section{Automorphisms on $\T^6$ with transitive \\two-dimensional unstable foliations}

Let us construct examples of automorphisms of $\T^6$ with
two-dimensional transitive unstable foliation. We start again as in \cite{Kat,LT} with a degree
three monic real polynomial with integer coefficients $a,b,c$ being irreducible
over field $\Q$: $P=z^3+az^2+bz+c.$ Two different cases are considered. In the first case
this polynomial should have one real root $z_1$ lesser than two in absolute value and
a pair of different real roots $z_2, z_3$ being greater than two in absolute
values. In the second case this polynomial should have one real root $z_1$ lesser than
two in absolute value and a pair of complex conjugate roots $z_2,z_2^*$ with $|z_2| > 2$.

Having such degree three polynomial, we make the change $z= x + x^{-1}$ and multiply
the function obtained at $x^3$, therefore we get the sixth degree integer polynomial
$$Q = x^6 + ax^5 + (3+b)x^4 + (2a+c)x^3 + (3+b)x^2 + ax + 1.$$
Its roots are two complex conjugate numbers $x_{1,2}$ being the roots of the quadratic
polynomial $x^2 -z_1x +1$, they belong to the unit circle in $\CC$. Four
other roots for the first case are four real roots $x_{3}, x_3^{-1},x_{4},x_4^{-1}$
being roots of the quadratic equations $x^2-z_2x + 1 =0$ and $x^2-z_3x + 1 = 0$.
Since $|z_i| > 2$, the absolute values of $x_j,$ $j=3,4,$ are greater than one.
As an example, we take the irreducible third order polynomial $P =
z^3-2z^2-8z+1$ \cite{Kat}. It has two real roots of absolute value larger
than two, and a real root of absolute value lesser than two. Its self-reciprocal
polynomial is
\begin{equation}\label{2re}
Q(x)= x^6 - 2x^5 - 5x^4 - 3x^3 - 5x^2 - 2x + 1,
\end{equation}
which is irreducible over $\Q$, has four real roots $\lambda_1,\lambda_2$,
$\lambda^{-1}_1,\lambda^{-1}_2$, $|\lambda_{1,2}|>1$, and a pair of complex
conjugate roots of absolute value 1. All these numbers are algebraic and $Q$ is their
minimal polynomial. Its companion matrix $A$ has two-dimensional subspaces
$W^u, W^s$ and generates a partially hyperbolic action on $\T^6$ with two-dimensional
unstable and stable foliations, and two-dimensional center foliation.
Matrix $A$ can be made symplectic w.r.t. the nonstandard symplectic
structure generated skew symmetric nondegenerate matrix $J$ in
(\ref{nonst}) with $a=-2, b=-5, c= -3$, then $\det J = 36.$

For the second case complex root $z_2 = u+iv$, $uv \ne 0,$ can be represented as
$u=(\rho + \rho^{-1})\cos\alpha,$ $u=(\rho - \rho^{-1})\sin\alpha,$ where
$x_{3,4}=\rho\exp[\pm i\alpha],$ $\rho>1$, $x^{-1}_{3,4}=\rho^{-1}\exp[\mp i\alpha].$
From these equalities we get the system for finding $\rho,\alpha$ at the given $u,v$:
$$
\frac{u^2}{\rho^2+\rho^{-2}+2}+\frac{u^2}{\rho^2+\rho^{-2}-2}=1,\;\tan\alpha =
\frac{u(\rho^2+1)}{v(\rho^2-1)}.
$$
Introducing variable $s = \rho^2+\rho^{-2} > 2,$ we come to the quadratic
equation for $s$
\[
s^2 - (u^2+v^2)s - 4+2(u^2-v^2)=0.
\]
The monic polynomial $P=z^3+az^2+bz+c$ has a unique real root of absolute value less than 2,
two other roots have to be complex conjugate. As an example, one can take
the polynomial $P= z^3 - 3z^2 +6z -1$ with $z_1 \approx  0.182,$ $z_{2,3}\approx
1.409 \pm 1.871 i$ or $P= z^3 - z - 1$ with $z_1 \approx 1.325,
z_{2,3}\approx -0.662\pm 0.562 i$.
This gives two reciprocal sixth degree integer irreducible polynomial
\begin{equation}\label{2com}
x^6 - 3x^5 +9x^4 - 7x^3 + 9x^2 - 3x + 1\;\mbox{and\;} x^6 + 2x^4 - x^3 + 2x^2 + 1
\end{equation}

Consider now the automorphism in $\R^6$ generated by the companion
matrix of the polynomial (\ref{2re}). Its eigenvectors of real roots have the form
$$
{\bf f_\lambda} = (1,\lambda,\lambda^2,\lambda^3,\lambda^4,\lambda^5)^\top,\;\lambda =
\lambda_{1,2}.
$$
Thus, no integers vectors $\bf n$ can exist such that $(\bf{n},\bf{f_{\lambda_{k}}})=0,$
$k=1,2,$ otherwise, at least one of $\lambda_k$ is a root of a monic integer polynomial of
degree five or lesser.

For the case of the polynomial (\ref{2com}) we have for its companion matrix
complex conjugate eigenvalues $\lambda, \lambda^*$ with $|\lambda|> 1$ with
two complex conjugate eigenvectors
$$
{\bf f_\lambda} = (1,\lambda,\lambda^2,\lambda^3,\lambda^4,\lambda^5)^\top,\;{\bf f_\lambda^*}.
$$
Their real and imaginary parts provide two real independent vectors $\bf{g}_r,
\bf{g}_i$, ${\bf f_\lambda} = \bf{g}_r + i \bf{g}_i$. Suppose there is an integer nonzero
vector $\bf n\in \Z^6$ such that $(\bf n,\bf{g}_r)=0$ and $(\bf n,\bf{g}_i)=0.$ This implies
the equality
$$
(\bf{n,g}_r+i\bf{g}_i) = n_1 +n_2\lambda + n_3\lambda^2 + n_4\lambda^3 + n_5\lambda^4 +
n_6\lambda^5 = 0,
$$
i.e. $\lambda$ is the algebraic number of the degree five or lesser. We
come to the contradiction. So, in both cases the leaves of the unstable
foliation are dense in $\T^6.$

One more example of an automorphism on $\T^6$ with transitive unstable two-dimensional
foliation is given as follows. Take a block-diagonal integer matrix $S_2$ composed of two
integer blocks, one of which $(4\times 4)$-block has eigenvalues $\lambda, \lambda^{-1},
\exp[i\alpha_1],$ $\exp[-i\alpha_1]$, $\lambda > 1,$ that generates a partially hyperbolic
transitive automorphism on $\T^4$, and the second $(2\times 2)$-blocks that generates an Anosov
automorphism on $\T^2$
$$
S_2=\begin{pmatrix}
	2&1&2&1&0&0\\
	1&1&1&1&0&0\\
	0&2&1&1&0&0\\
	2&-2&1&0&0&0\\
	0&0&0&0&2&1\\
	0&0&0&0&1&1\end{pmatrix}.
$$
Characteristic polynomial $\chi_1(\lambda) =(\lambda^4 - 4\lambda^3 + \lambda^2 -
4\lambda + 1)(\lambda^2-3\lambda+1)$ of the related symplectic matrix $S_2$ is reducible
with roots
$$
\lambda_{1,2} = 1 + \frac{\sqrt{5}}{2}\pm\frac{\sqrt{5+4\sqrt{5}}}{2},\;
\lambda_{3,4} = 1 - \frac{\sqrt{5}}{2}\pm i\frac{\sqrt{4\sqrt{5}- 5}}{2},\;
\lambda_{5,6} = \frac{3}{2}\pm\frac{\sqrt{5}}{2}.
$$
Here the two-dimensional unstable foliation is generated by real eigenvectors of real
eigenvalues $\lambda_1$ and $\lambda_5.$ Let us verify that the Corollary
\ref{cor} holds here. Indeed, eigenvectors of these eigenvalues have the
form: $\bf{e}_1=(a,b,c,d,0,0)^\top$ and $\bf{e}_2=(0,0,0,0,f,g)$. Assume, on the contrary,
there is an integer vector $\bf{r}$ such that both equalities
$(\bf{r},\bf{e}_1)=0$ and $(\bf{r},\bf{e}_2)=0$ hold. Then the second of
them gives $r_5f+r_6g =0$, that says the ratio $f/g$ is rational. But it
is impossible for the Anosov automorphism on $\T^2$ \cite{Kat}. Therefore,
the unstable foliation is transitive.

For the case of an automorphism with transitive unstable
foliation the following assertion is valid.

\begin{proposition}
An automorphism $f_A$ with transitive unstable foliation is transitive as
a diffeomorphism of $\T^6.$
\end{proposition}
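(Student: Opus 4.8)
The plan is to deduce topological transitivity of $f_A$ from the expansion along $W^u$ combined with the density of $p(W^u)$ in $\T^6$. Recall that $f_A$ is topologically transitive precisely when for every pair of non-empty open sets $U,V\subset\T^6$ there is $n\ge 1$ with $f_A^n(U)\cap V\neq\varnothing$; since $\T^6$ is a compact metric space with no isolated points, producing such an $n$ for all $U,V$ also yields a point with dense orbit (Birkhoff transitivity theorem). So it suffices to find such an $n$.

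\emph{Step 1 (uniform density of large leaf-disks).} Every leaf of the unstable foliation is an affine translate $p(v+W^u)$ of $p(W^u)$, and $p(W^u)$ is dense in $\T^6$ by hypothesis (this is exactly the transitive case of the preceding Proposition). Since $\T^6$ is compact, for each $\eps>0$ there is $R(\eps)>0$ such that the $p$-image of the flat $R(\eps)$-ball $B_{R(\eps)}\subset W^u$ is $\eps$-dense in $\T^6$: cover $\T^6$ by finitely many balls of radius $\eps/2$, approximate each centre to within $\eps/2$ by a point $p(w)$ with $w\in W^u$ (possible by density), and take $R(\eps)$ larger than the finitely many norms $\|w\|$ so chosen. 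Because translations are isometries of $\T^6$, the same $R(\eps)$ works on every leaf: the intrinsic $R(\eps)$-disk of any unstable leaf, centred at any of its points, is $\eps$-dense in $\T^6$.

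\emph{Step 2 (expansion, and conclusion).} The restriction $A|_{W^u}$ has both its eigenvalues (the real pair $\lambda_1,\lambda_2$, or the complex conjugate pair) of modulus greater than $1$, hence $m(A^n|_{W^u})\to\infty$ as $n\to\infty$. In the universal cover, $f_A$ lifts to $L_A$, which maps the coset $\tilde x+W^u$ affinely onto $A\tilde x+W^u$; consequently $f_A^n$ maps an intrinsic $\delta$-disk $D$ of $W^u(x)$ around $x$ to a set containing the intrinsic disk of radius $m(A^n|_{W^u})\,\delta$ of $W^u(f_A^n(x))$ around $f_A^n(x)$. Now, given non-empty open $U,V\subset\T^6$: choose $x\in U$ and $\delta>0$ so that the intrinsic $\delta$-disk $D\subset W^u(x)$ around $x$ lies in $U$; choose $\eps>0$ so that $V$ contains an $\eps$-ball; and choose $n$ with $m(A^n|_{W^u})\,\delta\ge R(\eps)$. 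Then $f_A^n(D)$ contains an intrinsic $R(\eps)$-disk in $W^u(f_A^n(x))$, which by Step 1 is $\eps$-dense and hence meets $V$. Therefore $f_A^n(U)\cap V\supseteq f_A^n(D)\cap V\neq\varnothing$, and $f_A$ is topologically transitive.

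The only point requiring genuine care is Step 1 — upgrading density of the single leaf $p(W^u)$ to a radius $R(\eps)$ beyond which \emph{all} leaf-disks are $\eps$-dense; here one uses compactness of $\T^6$ together with the fact that all unstable leaves are parallel translates of $p(W^u)$, so that, metrically, no leaf-disk sees less of the torus than another. (An alternative, less self-contained route: transitivity of the unstable foliation forces $A$ to have no root-of-unity eigenvalue, for otherwise the corresponding cyclotomic factor of $\chi_A$ would give a proper rational $A$-invariant subspace containing $W^u$, whence $(W^u)^\perp$ would contain a nonzero integer vector, contradicting the density criterion of Corollary~\ref{cor}; then $f_A$ is ergodic by Halmos's theorem, and a fortiori topologically transitive.)
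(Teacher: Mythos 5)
Your proof is correct, but it follows a genuinely different route from the paper's. The paper argues via periodic points: since $A$ has no root-of-unity eigenvalues, periodic points (the rational points) are dense, so one picks a $k$-periodic point $s\in U$, uses density of its unstable leaf to find $q\in V\cap W^u(s)$, and then pulls $q$ back by $f_A^{-kn}$, which contracts along the leaf onto $s$, landing in $U$; this gives $f_A^{kN}(U)\cap V\ne\emptyset$. You instead push forward: using compactness of $\T^6$ and the fact that every unstable leaf is an isometric translate of the dense leaf $p(W^u)$, you extract a uniform radius $R(\eps)$ beyond which every intrinsic leaf-disk is $\eps$-dense, and then let the linear expansion $m(A^n|_{W^u})\to\infty$ blow a small leaf-disk inside $U$ up past radius $R(\eps)$, so that its image meets $V$. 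Both arguments hinge on the homogeneity of the foliation (all leaves are translates of the dense one), but yours avoids any appeal to density of periodic points and hence to the unproved-in-the-paper assertion that $A$ has no root-of-unity eigenvalues (which you in fact justify separately, via the cyclotomic-factor argument and Corollary~\ref{cor}, in your closing parenthesis). A further payoff of your version: once $m(A^n|_{W^u})\,\delta\ge R(\eps)$ the same conclusion holds for every larger $n$, so you actually establish topological mixing, which is stronger than the transitivity obtained by the paper's argument; the paper's proof, on the other hand, is shorter and, as the authors remark, applies verbatim to any partially hyperbolic toral automorphism with transitive unstable foliation --- though your argument shares that generality as well.
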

\begin{proof} If a leaf through the fixed point $\hat{O}$ is transitive,
then all leaves are transitive, since they are obtained by the shifts on
the group $\T^6.$ In order to prove the transitivity of $f_A$, we need for any
two open sets $U,V$ in $\T^6$ to
find some $m\in \Z$ such that $f_A^m(U)\cap V \ne \emptyset.$ Since $A$ has no eigenvalues
being roots of unity, the periodic orbits of $f_A$ coincides with $\Q^n$ (the set of points
in $\T^6$ with rational coordinates) and therefore they form a dense set. Hence there
is a $k$-periodic point $s\in U.$ Its unstable leaf $W^u(s)$ is dense in $\T^6$ and
so intersects $V.$ Let $q\in V\cap W^u(s)$. Then the sequence $f_A^{-kn}(q)$ tends
to $s$, as $n\to \infty$, and $f_A^{-kN}(q)\in U$ for $N$ large enough,
so $f_A^{kN}(U)\cap V\ne\emptyset$, we can set $m=kN.$
\end{proof}

\begin{remark}
It worth remarking that the proof works for any partially hyperbolic
automorphism of $\T^n$, $n\ge 2,$ with transitive unstable foliation.
\end{remark}

\section{Decomposable automorphisms \\with 2-dimensional unstable foliations}

Here we present automorphisms with 2-dimensional unstable and stable foliations
which we call decomposable. They are characterized by the property the
closure of their unstable (stable) leaves be tori of a dimension lesser
than six. As was proved in the Proposition \ref{clos2}, the closure of a
leaf of the unstable foliation can be a four-dimensional torus. Let us
construct corresponding examples.

Three different cases are possible here. In the first case, we select a block-diagonal
integer matrix $S_1$ composed of two integer blocks, one of which
$(4\times 4)$-block with eigenvalues $\lambda_1,\lambda_1^{-1},\lambda_2,\lambda_2^{-1}$,
$\lambda_i > 1,\quad i=1,2,$ generates an Anosov automorphism on $\T^4$ with transitive
two-dimensional unstable foliation, and the second $(2\times 2)$-block has two complex
conjugate eigenvalues $\exp[i\alpha], \exp[-i\alpha]$ (then $\alpha/2\pi = 1/3,$ $1/4,1/6$
\cite{LT}). For instance, the following matrix suits
$$
S_1=\begin{pmatrix}
	0&1&0&0&0&0\\
	0&0&1&0&0&0\\
	0&0&0&1&0&0\\
	-1&-4&12&-4&0&0\\
	0&0&0&0&0&1\\
	0&0&0&0&-1&1\end{pmatrix}.
$$
The characteristic polynomial $\chi_1(\lambda) =(\lambda^4 +4\lambda^3 -12 \lambda^2 +
4\lambda + 1)(\lambda^2-\lambda+1)$ of the related symplectic matrix $S_1$ is reducible with
roots
$$
\begin{array}{l}
\lambda_1 =-1+\frac{3\sqrt{2}}{2}+\frac{\sqrt{6}\sqrt{3-2\sqrt{2}}}{2}\approx
1.629,\;\lambda_2 =-1+\frac{3\sqrt{2}}{2}-\frac{\sqrt{6}\sqrt{3-2\sqrt{2}}}{2}\approx 0.614,\\
\lambda_3 =-1-\frac{3\sqrt{2}}{2}+\frac{\sqrt{6}\sqrt{3+2\sqrt{2}}}{2}\approx -0.165,\;
\lambda_4 =-1-\frac{3\sqrt{2}}{2}-\frac{\sqrt{6}\sqrt{3+2\sqrt{2}}}{2}\approx -6.078,\\
\lambda_{5,6} = \frac{1}{2}\pm i\frac{\sqrt{3}}{2}.
\end{array}
$$
This matrix has a pair of negative eigenvalues $\lambda_{3,4}$. If one wish to get the case
when all eigenvalues outside of the unit circle are positive, one needs to take the square
of this $(4\times 4)$-block.

In the second case, it is sufficient to choose, for example, a block-diagonal integer matrix
$S_2$ composed of two integer blocks, one of which $(4\times 4)$-block has two pairs
complex conjugate eigenvalues $\rho\exp[i\alpha],\rho \exp[-i\alpha],$ $\rho^{-1}\exp[i\alpha],
\rho^{-1}\exp[-i\alpha]$ outside the unit circle (it is also an Anosov automorphism on
$\T^4$ with two-dimensional unstable and stable foliations), and another $(2\times 2)$-block
has a pair complex conjugate eigenvalues $\exp[i\alpha], \exp[-i\alpha]$ on the unit circle
(then $\alpha_/2\pi = 1/3,1/4,1/6$ \cite{LT}).
$$
S_2=\begin{pmatrix}
	0&1&0&0&0&0\\
	0&0&1&0&0&0\\
	0&0&0&1&0&0\\
	-1&5&-9&5&0&0\\
	0&0&0&0&0&1\\
	0&0&0&0&-1&1\end{pmatrix}.
$$
The characteristic polynomial $\chi_1(\lambda) =(\lambda^4 - 5\lambda^3 +9 \lambda^2 -
5\lambda + 1)(\lambda^2-\lambda+1)$ of the related symplectic matrix $S_2$ is
reducible. The first multiplier is irreducible polynomial that is deduced
from the second order polynomial $z^2 -5z+7$ by means of the change $z= \lambda +
1/\lambda.$ Both roots $5\pm i\sqrt{3}$ are greater than two in modulus.

The third case is generated by a block-diagonal integer matrix $S_3$ composed of three
integer blocks, two of which $(2\times 2)$-blocks generate Anosov automorphisms on $\T^2$, and
the third block has a pair complex conjugate eigenvalues $\exp[i\alpha], \exp[-i\alpha]$
on the unit circle (then $\alpha/2\pi = 1/3,1/4,1/6,$ \cite{LT}):
$$
S_3=\begin{pmatrix}
	2&1&0&0&0&0\\
	1&1&0&0&0&0\\
	0&0&5&3&0&0\\
	0&0&3&2&0&0\\
	0&0&0&0&0&1\\
	0&0&0&0&-1&1
\end{pmatrix}.
$$
The characteristic polynomial $\chi_3(\lambda) =(\lambda^2-3\lambda+1)(\lambda^2-7\lambda+1)
(\lambda^2-\lambda+1)$ of the related symplectic matrix $S_3$ is reducible with roots
$$
\lambda_{1,2}=\frac{3}{2}\pm\frac{\sqrt{5}}{2},\;
\lambda_{3,4} =\frac{7}{2}\pm \frac{3\sqrt{5}}{2},\;
\lambda_{5,6} = \frac{1}{2}\pm i\frac{\sqrt{3}}{2}.
$$
This case is similar, in a sense, to the first and second ones, since the
related unstable foliation for $f_{S_3}$ is two-dimensional and transitive
on $\T^2\times\T^2.$ Indeed, related eigenvectors of the real eigenvalues,
that are greater than one, obey the conditions of the Corollary \ref{cor}.
Nevertheless, we distinguish this case since such automorphisms do not topologically
conjugate to any automorphism of the first or second cases.

\section{Classification of partially hyperbolic automorphisms}\label{sec-phaut}

At the study of partially hyperbolic symplectic automorphisms a natural question arises: when
two such automorphisms are topologically conjugate. Recall that two homeomorphisms $f_1, f_2$
of a metric space $M$ are called topologically conjugate, if there is a homeomorphism
$h:M\to M$ such that $h\circ f_1=f_2\circ h$.

Note, that classification of ergodic automorphisms of the torus from the measure theory
point of view is given by their entropy \cite{Katz}, which is equal to the sum of logarithms
of the absolute values for eigenvalues greater than the unity of the matrix A. This follows
from the Ornstein isomorphism theorem \cite{Orn} and the fact that every ergodic automorphism
of a torus is Bernoulli one with respect to the Lebesgue measure \cite{Katz}. The Arov's
theorem (see above) provides a topological conjugation of two automorphisms $f_A,
f_B$ on $\T^n$. An isomorphism of the group $\T^n$ in coordinates $\theta$
is given by an integer unimodular matrix. The existence of a conjugating
homeomorphism $h: \T^6 \to \T^6$ for two automorphisms $f_A, f_{A'}$ implies the relation
$H\circ A = A'\circ H$ in the fundamental group $\Z^6$ of the torus,
here $H$ is the linear homomorphism in $\Z^6$ generated by $h$. Thus, matrices $A, A'$
are similar by $H$. Conversely, if matrices $A,A'$ are integrally similar,
i.e. there is an integer unimodular matrix $H$ such that $HA = A'H$, then
$H$ induces the automorphism $h=f_H$ of the torus $\T^6.$ The covering map
for $h\circ f_A$ is $L_HL_A =  L_{HA}$ and for $f_{A'}\circ h$ is $L_{A'}L_H =
L_{A'H}$. Hence, $L_HL_A =  L_{A'}L_H$ and the relation $h\circ f_A = f_{A'}\circ
h$ holds. So, for classification of partially hyperbolic automorphisms on a 6-dimensional
torus, the following theorem holds
\begin{theorem}
	Two symplectic partially hyperbolic automorphisms $f_A, f_B$ generating transitive
automorphisms on $\T^6$ with two-dimensional unstable (stable) foliations are topologically
	conjugate if and only if their matrices $A,B$ are integrally similar.
\end{theorem}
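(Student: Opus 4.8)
The statement is an ``if and only if'', and the ``if'' part has in effect already been argued above: if there is an integer unimodular $H$ with $HA=BH$, then $h=f_H$ is a homeomorphism of $\T^6$, and comparing covering maps $L_HL_A=L_{HA}=L_{BH}=L_BL_H$ shows $h\circ f_A=f_B\circ h$. So the real content is the ``only if'' direction, and my plan is to extract from an arbitrary topological conjugacy $h$ the matrix conjugating $A$ to $B$ by lifting to the universal cover $p:\R^6\to\T^6$; notably this step uses no base point and no dynamics.

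First I would record that a homeomorphism $h:\T^6\to\T^6$ induces an automorphism $H:=h_*$ of $\pi_1(\T^6)=\Z^6$, and since $(h^{-1})_*=H^{-1}$ we get $H\in\GL(6,\Z)$. Then I would lift $h$ to a homeomorphism $\tilde h:\R^6\to\R^6$; standard covering space theory gives the equivariance $\tilde h(x+n)=\tilde h(x)+Hn$ for all $x\in\R^6$, $n\in\Z^6$, because $\tilde h$ must conjugate the deck group of translations by $\Z^6$ to itself via $h_*$.

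Next comes the core computation. Using the linear lifts $L_A,L_B$ of $f_A,f_B$, the maps $\tilde h\circ L_A$ and $L_B\circ\tilde h$ both cover $h\circ f_A=f_B\circ h$, so they differ by a deck translation:
$$\tilde h(L_A x)=L_B(\tilde h(x))+m,\qquad x\in\R^6,$$
for a fixed $m\in\Z^6$. Replacing $x$ by $x+n$ with $n\in\Z^6$, and using the equivariance of $\tilde h$ together with $L_A n=An$ and $L_B(Hn)=BHn$, the left side becomes $\tilde h(L_A x)+HAn$ while the right side becomes $L_B(\tilde h(x))+BHn+m$; subtracting the unshifted identity leaves $HAn=BHn$ for every $n\in\Z^6$, i.e. $HA=BH$. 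Hence $A$ and $B$ are integrally similar, which finishes the proof.

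I do not foresee a serious obstacle: the argument is purely topological and in fact works for any automorphism of any torus, so the hypotheses of symplecticity, partial hyperbolicity, transitivity and $\dim W^u=\dim W^s=2$ serve only to fix the class under discussion (and to contrast with the decomposable case, whose natural invariants are the product factors up to conjugacy). The one place to be careful is the bookkeeping of the deck-translation ambiguity $m$: although $m$ survives in the lifted conjugacy relation, it must cancel once one differentiates along the lattice $\Z^6$, and this cancellation is exactly what yields $HA=BH$. Should one prefer a more geometric route, one could instead argue that a topological conjugacy carries the transitive unstable foliation of $f_A$ onto that of $f_B$ and matches the associated expansion data, but the lifting argument above is shorter and self-contained.
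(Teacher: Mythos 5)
Your proposal is correct and follows essentially the same route as the paper: the ``only if'' direction is the action of the conjugating homeomorphism on the fundamental group $\Z^6$ (which you justify in detail by lifting to $\R^6$ and tracking deck translations, a step the paper leaves implicit via Arov's theorem and functoriality), and the ``if'' direction is the same comparison of covering maps $L_HL_A=L_BL_H$. You also correctly observe, as the argument shows, that none of the symplectic, partially hyperbolic, or transitivity hypotheses is actually used in this equivalence.
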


\section{Acknowledgements}\label{sec-ack}

The work was fulfilled at the International Laboratory of Dynamical Systems and Applications
of NRU HSE of the Ministry of Science and Higher Education of RF (grant No. 075-15-2022-1101).
A part of this research (Section 1-3, 6) was supported by the grant 22-11-00027 of the Russian
Science Foundation. Results obtained in Sections 4,5 were supported by the grant of RSF
19-11-00280.

\end{document}